\documentclass[11pt,reqno]{amsart}

\usepackage[T1]{fontenc}
\usepackage[utf8]{inputenc}
\usepackage{lmodern}
\usepackage{amsmath,amssymb,amsthm,mathtools}
\usepackage[hidelinks]{hyperref}
\usepackage{microtype}

\allowdisplaybreaks
\numberwithin{equation}{section}

\theoremstyle{plain}
\newtheorem{theorem}{Theorem}[section]
\newtheorem{corollary}[theorem]{Corollary}
\newtheorem{lemma}[theorem]{Lemma}
\newtheorem{proposition}[theorem]{Proposition}

\theoremstyle{definition}

\theoremstyle{remark}
\newtheorem{remark}[theorem]{Remark}

\newcommand{\T}{\mathbb T}
\newcommand{\Z}{\mathbb Z}

\newcommand{\eps}{\varepsilon}
\newcommand{\dd}{\,d}
\newcommand{\wh}{\widehat}
\newcommand{\norm}[1]{\left\lVert #1\right\rVert}
\newcommand{\abs}[1]{\left\lvert #1\right\rvert}

\hypersetup{
  pdftitle={An Endpoint Dini Form of Zhizhiashvili's Theorem for Rectangular Fourier Series},
  pdfauthor={Ushangi Goginava},
  pdfsubject={Multiple Fourier series and almost-everywhere rectangular convergence},
  pdfkeywords={Fourier series, rectangular partial sums, Pringsheim convergence, Zhizhiashvili theorem}
}

\begin{document}

\title[A Critical-Scale Extension of Zhizhiashvili's Theorem]
{A Critical-Scale Extension of Zhizhiashvili's Theorem for Rectangular Fourier Series}
\author[U. Goginava]{Ushangi Goginava}
\address{Department of Mathematical Sciences, United Arab Emirates University, P.O. Box No. 15551, Al Ain, Abu Dhabi, United Arab Emirates}
\email{zazagoginava@gmail.com}
\email{ugoginava@uaeu.ac.ae}
\date{}
\subjclass[2020]{Primary 42B05; Secondary 42A20, 42B08}
\keywords{Multiple Fourier series, rectangular partial sums, Pringsheim convergence, almost-everywhere convergence, logarithmic modulus of continuity, Zhizhiashvili theorem, Kaczmarz--Kojima theorem}

\begin{abstract}
We address a long-standing endpoint problem arising from Zhizhiashvili's logarithmic modulus theorem for multiple Fourier series. We prove an endpoint Dini criterion for almost-everywhere Pringsheim convergence of ordinary rectangular partial sums. In Zhizhiashvili's theorem the logarithmic modulus is assumed with exponent strictly above the critical value; here this strict power margin is replaced by a summable endpoint Dini condition. As a consequence, one obtains double-logarithmic endpoint classes lying outside the range of the classical theorem. The proof reduces the endpoint smoothness assumption to the Kaczmarz--Kojima product-logarithmic coefficient criterion by weighted translation-difference estimates.
\end{abstract}

\maketitle
\enlargethispage{8pt}

\section{Introduction}

Almost-everywhere convergence of Fourier series has a fundamentally different nature in one and in several variables. In one dimension, the Carleson--Hunt theorem gives almost-everywhere convergence of the symmetric partial sums for every function in $L^p(\T)$, $p>1$ \cite{Carleson1966,Hunt1968}. In several variables, the geometry of the partial sums becomes part of the problem. The present note concerns ordinary symmetric rectangular sums and convergence in the sense of Pringsheim.

Throughout, $\T=\mathbb R/2\pi\mathbb Z$, and $dx$ denotes the normalized Haar measure on $\T^d$; thus, on the fundamental cube,
\[
 dx=(2\pi)^{-d}\,dx_1\cdots dx_d .
\]
For $f\in L^1(\T^d)$ put
\[
 \wh f(k)=\int_{\T^d} f(x)e^{-ik\cdot x}\dd x,
 \qquad k=(k_1,\ldots,k_d)\in\Z^d.
\]
The ordinary rectangular partial sums are
\begin{equation}\label{eq:rectangular-sums}
 S_{\mathbf n}f(x)=
 \sum_{\abs{k_1}\le n_1}\cdots\sum_{\abs{k_d}\le n_d}\wh f(k)e^{ik\cdot x},
 \qquad \mathbf n=(n_1,\ldots,n_d)\in\mathbb N_0^d.
\end{equation}
We say that $S_{\mathbf n}f$ converges to $f$ in the Pringsheim sense if
\[
 S_{\mathbf n}f(x)\longrightarrow f(x)
 \qquad\text{as }\min_{1\le j\le d}n_j\to\infty .
\]
This is a genuinely multi-parameter question. Fefferman's divergence theorem shows that the higher-dimensional theory contains divergence phenomena absent from the one-dimensional theory \cite{Fefferman1971}. It is therefore natural to ask which quantitative smoothness assumptions recover almost-everywhere convergence at the borderline of the logarithmic scale.

For $h=(h_1,\ldots,h_d)\in\T^d$, we write
\[
 \abs{h}=(h_1^2+\cdots+h_d^2)^{1/2},
\]
where $h_j\in[-\pi,\pi)$ is the representative near the origin. For $f\in L^2(\T^d)$ let
\begin{equation}\label{eq:modulus}
 \omega_2(f,\delta)=\sup_{\abs{h}\le\delta}\norm{f(\cdot+h)-f(\cdot)}_{L^2(\T^d)},
 \qquad 0<\delta<1 .
\end{equation}
Zhizhiashvili proved the following fundamental logarithmic criterion: if $d\ge2$, $f\in L^2(\T^d)$, and
\begin{equation}\label{eq:zhizhiashvili-classical}
 \omega_2(f,\delta)=O\left(\left(\log \frac1\delta\right)^{-\beta}\right)
 \qquad(\delta\downarrow0)
\end{equation}
for some $\beta>d/2$, then the rectangular Fourier sums of $f$ converge almost everywhere in the Pringsheim sense \cite{Zhizhiashvili1996}. The strict inequality $\beta>d/2$ is the decisive feature of this theorem. It leaves open the critical logarithmic exponent itself.

The main result of this paper gives a partial endpoint solution to this problem by replacing the missing logarithmic power margin with a Dini summability condition.

\begin{theorem}[Endpoint Zhizhiashvili--Dini criterion]\label{thm:main}
Let $d\ge2$ and $f\in L^2(\T^d)$. Suppose that, for some $\eta\in(0,1)$,
\begin{equation}\label{eq:main-dini}
 \int_0^\eta \omega_2(f,t)^2\left(\log\frac et\right)^{d-1}\frac{\dd t}{t}<\infty .
\end{equation}
Then
\[
 S_{\mathbf n}f(x)\longrightarrow f(x)
 \qquad\text{for almost every }x\in\T^d
\]
in the Pringsheim sense.
\end{theorem}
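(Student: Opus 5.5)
The plan is to reduce the statement to the Kaczmarz--Kojima product-logarithmic coefficient criterion. In the form we use it says: if
\[
 \sum_{k\in\Z^d}\abs{\wh f(k)}^2\prod_{j=1}^d\log^2(\abs{k_j}+2)<\infty,
\]
then $S_{\mathbf n}f\to f$ almost everywhere in the Pringsheim sense. In $d$ variables this follows from the Rademacher--Menshov maximal inequality applied in each coordinate iteratively, which gives a product of one factor $\log(\abs{k_j}+2)$ per variable. Thus the whole task is to show that \eqref{eq:main-dini} implies this weighted coefficient condition. We decompose frequencies dyadically: for $\mathbf s=(s_1,\ldots,s_d)\in\mathbb N_0^d$ let $\Delta_{\mathbf s}$ be the block where $2^{s_j-1}\le\abs{k_j}<2^{s_j}$ (with $\abs{k_j}=0$ when $s_j=0$), and put
\[
 \delta_{\mathbf s}^2=\sum_{k\in\Delta_{\mathbf s}}\abs{\wh f(k)}^2 .
\]
The goal is then $\sum_{\mathbf s}\delta_{\mathbf s}^2\prod_j(s_j+1)^2<\infty$.

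\textbf{Step 1: a translation-difference estimate by Parseval.} For $h$ with $\abs{h}\le t$,
\[
 \norm{f(\cdot+h)-f}_2^2=\sum_k\abs{\wh f(k)}^2\abs{e^{ik\cdot h}-1}^2 .
\]
We average $h$ over the ball (or a cube) of radius $t$. Using $\int\abs{e^{ik\cdot h}-1}^2\,dh\gtrsim 1$ whenever $\abs{k}\gtrsim 1/t$, this gives the tail bound
\[
 \sum_{\abs{k}_\infty\ge 1/t}\abs{\wh f(k)}^2\lesssim\omega_2(f,t)^2 .
\]
With $t=2^{-m}$, every block $\Delta_{\mathbf s}$ with $\max_j s_j=m$ satisfies $\max_j\abs{k_j}\ge 2^{m-1}$, so
\[
 T_m:=\sum_{\max s_j\ge m}\delta_{\mathbf s}^2\lesssim\omega_2(f,2^{-m})^2 .
\]

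\textbf{Step 2: summation.} For a block with $\max_j s_j=m$ we bound $\prod_j(s_j+1)^2\le(m+1)^{2d}$, but this is too crude: it would require $\int\omega^2\log^{2d-1}$. We must therefore exploit that the blocks with $\max s_j=m$ are few compared with the mass they carry. The sharper move is to not bound each factor by $m+1$, but to use the weight $\prod_j(s_j+1)$ via Abel summation, $\prod_j(s_j+1)=\#\{\mathbf r\le\mathbf s\}$. Then
\[
 \sum_{\mathbf s}\delta_{\mathbf s}^2\prod_j(s_j+1)=\sum_{\mathbf r}\sum_{\mathbf s\ge\mathbf r}\delta_{\mathbf s}^2\le\sum_{\mathbf r}T_{\max_j r_j}\lesssim\sum_m m^{d-1}\omega_2(f,2^{-m})^2 .
\]
The last sum is comparable to the integral in \eqref{eq:main-dini} by monotonicity of $\omega_2$. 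So the Dini condition yields exactly the coefficient condition with weight $\prod_j\log(\abs{k_j}+2)$ to the first power. This matches the critical exponent $\beta=d/2$: with $\omega\sim m^{-\beta}$, $\sum_m m^{d-1-2\beta}$ is borderline at $\beta=d/2$.

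\textbf{Main obstacle.} The weight obtained in Step 2 is only the first power of the product logarithm, so the Kaczmarz--Kojima criterion must be available in that form, i.e.\ with $\prod_j\log(\abs{k_j}+2)$ rather than its square. I expect the paper's cited version is exactly this form: the one-dimensional Kolmogorov--Seliverstov--Plessner condition $\sum\abs{c_k}^2\log\abs{k}<\infty$, tensorized by Kaczmarz and Kojima. This is plausible because a single $\log$ suffices in one dimension for trigonometric series. The hard point is the multiparameter maximal inequality for $\sup_{\mathbf n}\abs{S_{\mathbf n}f}$ under this weighted $\ell^2$ condition. I would take it as the cited criterion. If it were needed, I would prove it by iterating the one-dimensional estimate
\[
 \norm{\sup_n\abs{S_n g}}_2^2\lesssim\sum_k\abs{\wh g(k)}^2\log(\abs{k}+2)
\]
coordinatewise, in vector-valued form (the one-dimensional bound extends to Hilbert-valued coefficients). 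Pringsheim convergence on a dense class of trigonometric polynomials then gives almost-everywhere convergence by the standard maximal-function density argument.
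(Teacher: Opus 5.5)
Your argument is correct once the opening formulation with $\prod_j\log^2(\abs{k_j}+2)$ is dropped. That squared condition is out of reach of the Dini hypothesis. What you actually derive is the first-power condition, which is exactly the form of the Kaczmarz--Kojima theorem the paper cites as Theorem~\ref{thm:KK}. You should simply cite it, as the paper does. Your coordinatewise vector-valued iteration sketch is not a proof as it stands: after one step, the supremum over the remaining indices is not a Hilbert norm.

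The top-level reduction is therefore the paper's; the difference is how the coefficient condition is obtained.
\begin{itemize}
\item \emph{The paper} works continuously and one coordinate at a time. Plancherel and Tonelli turn \eqref{eq:coordinate-dini} into a weighted sum of the integrals $\int_0^\eta\abs{e^{ik_jt}-1}^2(\log\frac et)^{d-1}\frac{\dd t}{t}$. Lemma~\ref{lem:oscillation} bounds these below by $c\log^d(\abs{k_j}+2)$. AM--GM, $\prod_j a_j\le\frac1d\sum_j a_j^d$, then produces the product weight.
\item \emph{You} discretize. Ball averaging gives the tail bound $\sum_{\abs{k}_\infty\ge1/t}\abs{\wh f(k)}^2\lesssim\omega_2(f,t)^2$. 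Summation by parts over dyadic blocks, together with monotonicity of $\omega_2$, compares the product-log sum with $\sum_m m^{d-1}\omega_2(f,2^{-m})^2$, and hence with the Dini integral.
\end{itemize}
The two computations are discrete and continuous versions of each other: the lemma is the continuous form of $\sum_{m\lesssim\log\abs{k_j}}m^{d-1}\asymp\log^d\abs{k_j}$.

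Your multi-index Abel summation is more than you need. Since $\prod_j(s_j+1)\le(\max_j s_j+1)^d$, a one-parameter summation over the shells $\max_j s_j=m$ already gives $\sum_m(m+1)^{d-1}T_m$. Your ``too crude'' objection concerns only the squared weight. This also shows, as the paper's route does, that one in fact gets $\sum_k\abs{\wh f(k)}^2\log^d(\abs{k}_\infty+2)<\infty$.

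The paper's route is leaner: there is no dyadic bookkeeping, and only the coordinate conditions \eqref{eq:coordinate-dini} are used. Yours produces an explicit quantitative tail estimate along the way.

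One harmless slip: blocks with $\max_j s_j=m$ only satisfy $\abs{k}_\infty\ge2^{m-1}$, so the tail bound should be applied with $t=2^{1-m}$.
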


Theorem~\ref{thm:main} is a significant endpoint advance of Zhizhiashvili's theorem in the $L^2$ scale. The classical hypothesis \eqref{eq:zhizhiashvili-classical} with $\beta>d/2$ immediately implies \eqref{eq:main-dini}. More importantly, Theorem~\ref{thm:main} also includes moduli lying exactly at the critical power $d/2$, provided that the remaining endpoint contribution is summable. The following consequence records the simplest explicit form of this gain.

\begin{corollary}[Double-logarithmic endpoint class]\label{cor:double-log}
Let $d\ge2$, $f\in L^2(\T^d)$, and $\gamma>0$. If
\begin{equation}\label{eq:double-log-assumption}
 \omega_2(f,\delta)=O\left(
 \left(\log\frac e\delta\right)^{-d/2}
 \left(\log\log\frac{e^e}{\delta}\right)^{-1/2-\gamma}
 \right)
 \qquad(\delta\downarrow0),
\end{equation}
then $S_{\mathbf n}f(x)\to f(x)$ for almost every $x\in\T^d$ in the Pringsheim sense.
\end{corollary}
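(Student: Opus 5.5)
The corollary should follow directly from Theorem~\ref{thm:main}: it is enough to check that \eqref{eq:double-log-assumption} implies the Dini condition \eqref{eq:main-dini} for some $\eta\in(0,1)$.

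By \eqref{eq:double-log-assumption} there are a constant $C$ and some $\eta\in(0,1)$ such that, for $0<t<\eta$,
\[
 \omega_2(f,t)^2\le C\left(\log\frac et\right)^{-d}\left(\log\log\frac{e^e}{t}\right)^{-1-2\gamma}.
\]
Inserting this into the integrand of \eqref{eq:main-dini}, the power $(\log\frac et)^{d-1}$ cancels against $d-1$ of the $d$ logarithmic factors. The integrand is then dominated by
\[
 \frac{C}{t\,\log\frac et\,\bigl(\log\log\frac{e^e}{t}\bigr)^{1+2\gamma}} .
\]

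Next substitute $u=\log\frac et$, so that $du=-dt/t$ and $u\ge 1$ on $(0,\eta)$. Note that
\[
 \log\frac{e^e}{t}=e-1+u\ge u .
\]
Hence
\[
 \log\log\frac{e^e}{t}\ge\log u .
\]
This is only useful for $u>e$, so restrict to small $\eta$, say $\eta<e^{1-e}$, which is allowed since the condition only concerns $t$ near $0$. Also, $\log(e-1+u)\ge1$ for all $u\ge1$, so the logarithm never vanishes and there is no division issue at the lower end. The integral is then bounded by
\[
 C\int_{\log(e/\eta)}^\infty \frac{du}{u(\log u)^{1+2\gamma}} .
\]
The further substitution $v=\log u$ turns this into $C\int v^{-1-2\gamma}\,dv$ over $[v_0,\infty)$ with $v_0>1$, which is finite because $\gamma>0$.

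Therefore \eqref{eq:main-dini} holds, and Theorem~\ref{thm:main} gives almost-everywhere Pringsheim convergence. The only point needing care is the choice of $\eta$ small enough that the double logarithm exceeds $1$ on the range of integration. The exponent $-1/2-\gamma$ is sharp for this argument: squaring it gives exactly $-1-2\gamma$, and the resulting integral converges precisely when $\gamma>0$.
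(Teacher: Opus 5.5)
Your proof is correct and follows the paper's argument. You insert the bound on $\omega_2(f,t)^2$ into the Dini integral, cancel $d-1$ logarithmic factors, and substitute $u=\log(e/t)$ to reduce to $\int^\infty du/(u(\log u)^{1+2\gamma})$, which converges since $\gamma>0$; then you apply Theorem~\ref{thm:main}. Your explicit choice $\eta<e^{1-e}$ and the identity $\log(e^e/t)=u+e-1$ just spell out what the paper compresses into ``for small $t$''.
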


Corollary~\ref{cor:double-log} is not a restatement of Zhizhiashvili's result. It permits the critical logarithmic power $d/2$ and compensates only by a summable iterated logarithm. Thus it applies to moduli which may decay more slowly than every power
\[
 \left(\log\frac1\delta\right)^{-d/2-\eps},\qquad \eps>0,
\]
and hence are outside the range of \eqref{eq:zhizhiashvili-classical}.

The proof is short and structural. We use the Kaczmarz--Kojima theorem, which says that a product-logarithmic square summability condition on the Fourier coefficients implies almost-everywhere rectangular convergence. The new point is that the endpoint Dini condition \eqref{eq:main-dini} forces exactly this coefficient condition. A weighted lower bound for the oscillatory multipliers $e^{imt}-1$ converts one-coordinate translation estimates into one-coordinate logarithmic coefficient estimates. The arithmetic--geometric mean inequality then converts the resulting family of estimates into the Kaczmarz--Kojima product weight.

\section{Preliminaries}

All logarithms are natural. Constants denoted by $C$ may change from line to line and may depend on fixed parameters such as $d$ and $\eta$, but not on the frequency variable. We use Plancherel's theorem and uniqueness of trigonometric Fourier coefficients in their standard forms; see, for instance, \cite{Zygmund2002}.

For a coordinate vector $e_j$, define
\[
 \Delta_j(t)f(x)=f(x+te_j)-f(x),\qquad 1\le j\le d.
\]
We shall use the following classical convergence theorem.

\begin{theorem}[Kaczmarz--Kojima product-log theorem]\label{thm:KK}
Let $d\ge2$ and $F\in L^2(\T^d)$. If
\begin{equation}\label{eq:KK}
 \sum_{k\in\Z^d}\abs{\wh F(k)}^2\prod_{j=1}^d \log(\abs{k_j}+2)<\infty,
\end{equation}
then the rectangular partial sums $S_{\mathbf n}F$ converge to $F$ almost everywhere in the Pringsheim sense.
\end{theorem}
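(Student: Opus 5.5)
The natural route is to deduce the almost-everywhere statement from a maximal inequality of Kolmogorov--Seliverstov--Plessner type in product form:
\begin{equation}\label{eq:KK-maximal-plan}
 \Bigl\|\sup_{\mathbf n\in\mathbb N_0^d}\abs{S_{\mathbf n}F}\Bigr\|_{L^2(\T^d)}^2
 \le C_d\sum_{k\in\Z^d}\abs{\wh F(k)}^2\prod_{j=1}^d\log(\abs{k_j}+2).
\end{equation}
Once \eqref{eq:KK-maximal-plan} is available, the theorem follows from a standard density argument. Trigonometric polynomials are dense in the weighted space defined by \eqref{eq:KK}; take for $P$ a square truncation of the Fourier series of $F$. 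For such $P$ we have $S_{\mathbf n}P=P$ as soon as $\min_j n_j$ exceeds the degree of $P$. Applying \eqref{eq:KK-maximal-plan} to $F-P$ and using Chebyshev's inequality, the Pringsheim oscillation $\limsup_{\min n_j\to\infty}\abs{S_{\mathbf n}F-F}$ has arbitrarily small $L^2$ norm, hence vanishes a.e. Everything therefore rests on \eqref{eq:KK-maximal-plan}.

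To prove \eqref{eq:KK-maximal-plan} I would linearize the supremum. Fix an arbitrary measurable choice $x\mapsto\mathbf n(x)=(n_1(x),\dots,n_d(x))$, which may be taken with finitely many values, and aim to bound $\int\abs{S_{\mathbf n(x)}F(x)}^2\dd x$ by the right-hand side with a constant independent of $\mathbf n(\cdot)$. The kernel is the tensor product $\prod_j D_{n_j(x)}(x_j-y_j)$. Write each Dirichlet kernel as
\[
 D_n(u)=\sin(nu)\cot(u/2)+\cos(nu),
\]
so that, apart from harmless bounded pieces, each factor is a modulated conjugate-function kernel $e^{\pm in u}\cot(u/2)$. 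In one variable, the classical Kolmogorov--Seliverstov--Plessner argument handles this by duality. For $g\in L^2$ with $\norm{g}_2=1$, one estimates
\[
 \int\!\!\int F(y)\,D_{n(x)}(x-y)\,\overline{g(x)}\dd y\dd x .
\]
One splits the frequencies of $F$ dyadically and uses the orthogonality of $e^{iky}$ against the localized kernel. The Cauchy--Schwarz inequality then yields exactly one factor $\log(\abs k+2)$, because $\int_{1/N}^{\pi}\abs{\cot(u/2)}\dd u\asymp\log N$. The point of this one-dimensional method is that it is a bilinear $L^2$ estimate which never uses a maximal operator in the auxiliary variables. It should therefore tensorize: the form in $d$ variables is a product of one-dimensional forms in which the frequency $k_j$ of $F$ only ever meets the kernel in the $j$-th variable. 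Running the one-variable estimate coordinatewise, with the other coordinates frozen, should then produce the product weight $\prod_j\log(\abs{k_j}+2)$.

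I expect the main obstacle to be precisely this tensorization. The linearizing function $\mathbf n(x)$ couples all coordinates, so one cannot simply apply the one-dimensional inequality in $x_1$ and then take a supremum in $x_2$. That would amount to iterating Carleson-type maximal operators, which do not commute, and this failure is consistent with Fefferman's counterexample. My first attempt would be to keep everything at the level of the quadratic form. I would expand $\abs{S_{\mathbf n(x)}F(x)}^2$ as a double sum over $k,k'$ and bound the resulting Gram-type matrix entries. These are integrals of products of modulated conjugate kernels, and they factor coordinatewise for each fixed value of $\mathbf n(x)$. A Schur test with weights $\prod_j\log(\abs{k_j}+2)$ would then apply, the Schur sums factoring into one-dimensional sums each of which is $O(\log(\abs{k_j}+2))$. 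If the off-diagonal control proves too lossy, my fallback is a dyadic block decomposition in each variable. Full dyadic blocks would be handled by Littlewood--Paley square functions, which do tensorize on $L^2$, and only the partial blocks would be handled by the Kolmogorov--Seliverstov--Plessner bilinear bound. The logarithm then counts the number of dyadic scales below $\abs{k_j}$.
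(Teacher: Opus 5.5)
The paper gives no proof of this theorem; it quotes it from Kaczmarz ($d=2$) and Kojima. Your reduction to an $L^2$ maximal inequality for $\sup_{\mathbf n}\abs{S_{\mathbf n}F}$ with the product-log weight, followed by the density argument, is correct and standard. Everything therefore rests on that maximal inequality, and the route you commit to first does not prove it.

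If you expand $\int\abs{S_{\mathbf n(x)}F(x)}^2\dd x$ in frequency, the Gram entries are $\int_{E_k\cap E_{k'}}e^{i(k-k')\cdot x}\dd x$ with $E_k=\{x:\abs{k_j}\le n_j(x)\text{ for all }j\}$. These are Fourier coefficients of arbitrary measurable sets. They do not factor over coordinates, because the level sets of $\mathbf n(\cdot)$ are not product sets, and they are in general only square-summable in $k-k'$. That rules out a Schur test in frequency.

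If you mean the physical-space dual form instead, the kernel does factor pointwise. But once the weight is inserted, each factor becomes $K_N(u)=\sum_{\abs{k}\le N}e^{iku}/\log(\abs{k}+2)$ with $N=\min(n_j(x),n_j(y))$. The only bound uniform in the linearization is of size $(\abs{u}\log(e/\abs{u}))^{-1}$ (take $N\approx 1/\abs{u}$), which is not integrable. So Schur's test loses a $\log\log$ factor in every variable, already for $d=1$: the single Kolmogorov--Seliverstov--Plessner logarithm cannot be obtained by treating all frequency scales at once.

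Your fallback is the right proof, but as stated it omits both mechanisms that make it work. The factor $\log(\abs{k_j}+2)$ is also not produced by counting dyadic scales; that mechanism is Menshov--Rademacher's and costs $\log^2$. Write $S^{(j)}_{n_j}=L_j+R_j$ with $L_j=S^{(j)}_{2^{m_j}}$ and $R_j=S^{(j)}_{n_j}-S^{(j)}_{2^{m_j}}$ for $2^{m_j}\le n_j<2^{m_j+1}$, and expand $\prod_j(L_j+R_j)$.

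\emph{Lacunary factors.} Square functions alone do not control a supremum of partial sums. Compare $S_{2^m}$ with the de la Vall\'ee Poussin mean $V_{2^m}$, and dominate $\abs{V_{2^m}g}\le CMg$ by the Hardy--Littlewood maximal function in that variable. The differences $S_{2^m}-V_{2^m}$ have spectrum in one dyadic annulus, so they go into an $\ell^2$ sum via Plancherel. Use $\sup_a(\sum_b\abs{x_{ab}}^2)^{1/2}\le(\sum_b\sup_a\abs{x_{ab}}^2)^{1/2}$ to move the remaining suprema inside.

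\emph{Factors $R_j$.} Here $F$ enters only through its block $2^{m_j}<\abs{k_j}<2^{m_j+1}$ in those variables. After linearizing and dualizing, the kernel $\prod_j\sum_{2^{m_j}<\abs{k_j}\le\min(n_j(x),n_j(y))}e^{ik_j(x_j-y_j)}$ is bounded by $\prod_j\min(2^{m_j+1},C/\abs{x_j-y_j})$ uniformly in $\mathbf n(\cdot)$. Schur's test then gives $C\prod_j(m_j+1)$.

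This cap by the block length is exactly what removes the coupling through $\mathbf n(x)$ that you identified as the main obstacle. The factor $m_j+1\asymp\log(\abs{k_j}+2)$ is the logarithm of the length of the single block containing $k_j$, coming from $\int\min(2^{m_j},1/\abs{u})\dd u$. Summing over blocks gives the maximal inequality.
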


For $d=2$ this is due to Kaczmarz; the higher-dimensional form is due to Kojima \cite{Kaczmarz1930,Kojima1977}.

\section{Weighted differences and logarithmic coefficients}

The next elementary estimate extracts a full logarithmic power from a one-dimensional translation difference.

\begin{lemma}[]\label{lem:oscillation}
Let $d\ge1$ and $0<\eta<1$. There exists $c=c(d,\eta)>0$ such that, for every integer $m$ with $\abs{m}\ge1$,
\begin{equation}\label{eq:oscillation}
 \int_0^\eta \abs{e^{imt}-1}^2\left(\log\frac et\right)^{d-1}\frac{\dd t}{t}
 \ge c\,\log^d(\abs{m}+2).
\end{equation}
\end{lemma}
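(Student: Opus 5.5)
We restrict the integral to the range where the oscillatory factor is bounded below. By symmetry $\abs{e^{imt}-1}^2=4\sin^2(mt/2)$ depends only on $\abs{m}$, so we may assume $m\ge1$. For $\abs{m}$ small, say $m<M_0$ with $M_0=M_0(\eta)$ a fixed threshold, the left side of \eqref{eq:oscillation} is a strictly positive continuous quantity depending only on $m$. The integrand is nonnegative and not identically zero on $(0,\eta)$, and it is integrable at $0$ because $\abs{e^{imt}-1}^2\le m^2t^2$. Since there are finitely many such $m$, the inequality holds for them with some constant $c_0>0$, after comparing with the bounded quantity $\log^d(M_0+2)$. So the real content is the case of large $m$.

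For large $m$ we split $(0,\eta)$ into the intervals $J_r=[2^{r}/m,\,2^{r+1}/m]$ for $r=0,1,\dots,R$, where $R$ is the largest integer with $2^{R+1}/m\le\eta$. Then $R\ge \log_2 m - C(\eta)$, which is $\ge \tfrac12\log_2 m$ for $m\ge M_0$. On each $J_r$ the variable $s=mt$ ranges over $[2^r,2^{r+1}]$, an interval of length at least $1$. The key averaging fact is that $\sin^2(s/2)$ has mean value bounded below on any interval of length $\ge1$. Concretely,
\[
\int_{a}^{2a}\sin^2(s/2)\,\frac{ds}{s}\ \ge\ c_1>0\qquad(a\ge1),
\]
because $\int_a^{2a}\sin^2(s/2)\,ds=a/2-\tfrac12[\sin s]_a^{2a}\ge a/2-1$. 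This lower bound is useful for $a\ge4$; for $1\le a<4$ the integral is a continuous positive function of $a$ on a compact set. Dividing by $s\le 2a$ then gives the bound. Meanwhile, on $J_r$ we have $t\le 2^{r+1}/m<\eta<1$, so $\log(e/t)\ge \log(e\,m/2^{r+1})\ge 1$. Changing variables $s=mt$ (the measure $dt/t$ is invariant), we get
\[
\int_{J_r}\abs{e^{imt}-1}^2\Bigl(\log\frac et\Bigr)^{d-1}\frac{dt}{t}\ \ge\ 4c_1\,\Bigl(\log\frac{e\,m}{2^{r+1}}\Bigr)^{d-1}.
\]

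Summing over $0\le r\le R$ and keeping only the indices $r\le \tfrac12\log_2 m$, we have $\log(em/2^{r+1})\ge \tfrac12\log m - \log 2$ for each such $r$. This is $\ge\tfrac14\log m$ once $m\ge M_0$. There are at least about $\tfrac12\log_2 m$ such indices, all lying below $R$. Hence the whole integral is at least $c\,(\log m)\cdot(\log m)^{d-1}=c\log^d m\ge c'\log^d(m+2)$. For $d=1$ the weight is absent and the same dyadic count gives $c\log m$.

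The only step that needs care is the uniform lower bound for the oscillatory average on dyadic blocks with small $a$, together with the bookkeeping of $M_0$ so that enough dyadic blocks lie inside $(0,\eta)$. Both are handled by the explicit computation above plus compactness, with the finitely many small $m$ absorbed into the constant $c(d,\eta)$.
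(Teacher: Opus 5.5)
Your argument is correct, but it is organized differently from the paper's.

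The paper works at the scale of single oscillations. It restricts to the short intervals $I_q=\{t>0:\abs{Nt-(2q+1)\pi}<\alpha\}$, $0\le q\le Q\asymp N\eta$, around the peaks of $\abs{e^{iNt}-1}^2$, where this factor is bounded below pointwise. It must then evaluate the resulting sum $\sum_{q\le Q}\frac1{q+1}\bigl(\log\frac{eN}{C(q+1)}\bigr)^{d-1}$. It does so by comparison with $\int_1^{c_2N}\bigl(\log\frac{eN}{Cu}\bigr)^{d-1}\frac{\dd u}{u}$ and the substitution $v=\log(eN/(Cu))$, obtaining $C(\log N)^d-C_{d,\eta}$.

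You instead use dyadic blocks $J_r$, on which $dt/t$ is scale invariant and the weight is essentially constant. You control the oscillatory factor only on average, via the explicit identity $\int_a^{2a}\sin^2(s/2)\,ds=a/2-\frac12(\sin 2a-\sin a)$. Discarding the scales $r>\frac12\log_2 m$ gives the uniform weight bound $\frac14\log m$, so everything reduces to counting $\gtrsim\log m$ blocks.

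What each route buys:
\begin{itemize}
\item Your route avoids the implicit comparabilities $t\asymp(q+1)/N$ and $\abs{I_q}\asymp N^{-1}$, as well as the sum-to-integral step. The price is a harmless $d$-dependent constant lost in the truncation.
\item The paper's route needs no averaging lemma, since its lower bound on $\abs{e^{iNt}-1}^2$ is pointwise. It also keeps all scales and evaluates the logarithmic weight integral in closed form.
\end{itemize}
Since only the order $\log^d(\abs{m}+2)$ matters, both suffice.

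A small point: in your compactness step, take $a\in[1,4]$ rather than $[1,4)$. Alternatively, note that only $a=1,2$ occur among the dyadic values $a=2^r<4$.
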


\begin{proof}
By symmetry it is enough to consider $m=N\ge1$. For every finite range $1\le N\le N_0(d,\eta)$ the assertion is absorbed by decreasing the constant, since the left-hand side is positive for each fixed $N\ge1$. Hence we assume that $N$ is large.

Choose $0<\alpha<\pi/8$. For integers $q\ge0$ set
\[
 I_q=\bigl\{t>0:\abs{Nt-(2q+1)\pi}<\alpha\bigr\}.
\]
On $I_q$ we have $\abs{e^{iNt}-1}^2\ge c_0$. If $0\le q\le Q$, where $Q=\lfloor c_1N\eta\rfloor$ and $c_1>0$ is chosen sufficiently small, then $I_q\subset(0,\eta)$ for all large $N$. Moreover, on $I_q$ one has $t\asymp(q+1)/N$ and $\abs{I_q}\asymp N^{-1}$. Hence
\begin{align*}
 \int_0^\eta \abs{e^{iNt}-1}^2\left(\log\frac et\right)^{d-1}\frac{\dd t}{t}
 &\ge C\sum_{q=0}^Q \frac1{q+1}
 \left(\log\frac{eN}{C(q+1)}\right)^{d-1}  \\
 &\ge C\int_1^{c_2N}\left(\log\frac{eN}{Cu}\right)^{d-1}\frac{\dd u}{u}.
\end{align*}
The change of variables $v=\log(eN/(Cu))$ shows that the last integral is bounded from below by $C(\log N)^d-C_{d,\eta}$. This is at least $c(d,\eta)\log^d(N+2)$ for all sufficiently large $N$, and the remaining finite range has already been absorbed into the constant.
\end{proof}

The next proposition is the main reduction step. It converts the endpoint Dini control of coordinate translation differences into the product-logarithmic Fourier coefficient condition needed for the Kaczmarz--Kojima convergence theorem.

\begin{proposition}[]\label{prop:difference-to-product}
Let $d\ge2$ and $f\in L^2(\T^d)$. Suppose that, for some $\eta\in(0,1)$ and every coordinate $j=1,\ldots,d$,
\begin{equation}\label{eq:coordinate-dini}
 \int_0^\eta \norm{\Delta_j(t)f}_{2}^{2}\left(\log\frac et\right)^{d-1}\frac{\dd t}{t}<\infty .
\end{equation}
Then
\begin{equation}\label{eq:product-log-condition}
 \sum_{k\in\Z^d}\abs{\wh f(k)}^2\prod_{j=1}^d\log(\abs{k_j}+2)<\infty .
\end{equation}
\end{proposition}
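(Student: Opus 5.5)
The plan is to pass to the Fourier side with Plancherel, apply Lemma~\ref{lem:oscillation} coordinate by coordinate, and then combine the $d$ resulting one-coordinate bounds into the product weight with the arithmetic--geometric mean inequality.

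\emph{Step 1 (Plancherel).} For each coordinate $j$ and each $t$,
\[
 \norm{\Delta_j(t)f}_2^2=\sum_{k\in\Z^d}\abs{\wh f(k)}^2\abs{e^{ik_jt}-1}^2 .
\]
Integrate this against $(\log(e/t))^{d-1}\,dt/t$ over $(0,\eta)$. Since the summands are nonnegative, Tonelli's theorem lets us exchange sum and integral. The terms with $k_j=0$ vanish. For $k_j\neq0$, Lemma~\ref{lem:oscillation} bounds the inner integral from below by $c\log^d(\abs{k_j}+2)$. Hypothesis \eqref{eq:coordinate-dini} therefore gives, for each $j$,
\[
 \sum_{k:\,k_j\ne0}\abs{\wh f(k)}^2\log^d(\abs{k_j}+2)<\infty .
\]
Since $k_j=0$ gives the value $\log^d 2$, which is a constant, and $f\in L^2$, the restriction $k_j\ne0$ can be dropped:
\[
 A_j:=\sum_{k\in\Z^d}\abs{\wh f(k)}^2\log^d(\abs{k_j}+2)<\infty .
\]

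\emph{Step 2 (AM--GM).} Put $a_j=\log^d(\abs{k_j}+2)$. Then
\[
 \prod_{j=1}^d\log(\abs{k_j}+2)=\Bigl(\prod_{j=1}^d a_j\Bigr)^{1/d}\le\frac1d\sum_{j=1}^d a_j .
\]
Multiplying by $\abs{\wh f(k)}^2$ and summing over $k$ bounds the left-hand side of \eqref{eq:product-log-condition} by $\frac1d\sum_{j=1}^d A_j<\infty$. This proves the proposition.

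\emph{Main obstacle.} The substantive input is entirely Lemma~\ref{lem:oscillation}, which we take as given: the weight $(\log(e/t))^{d-1}$ must produce the full power $\log^d\abs{m}$ and not merely $\log^{d-1}\abs{m}$, so that AM--GM yields the product weight. Once that lemma is available, the only point needing care is the interchange of sum and integral, and Tonelli's theorem covers it because every term is nonnegative.
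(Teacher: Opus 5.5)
Your proof is correct and follows the paper's argument essentially step for step. You use Plancherel with Tonelli, apply Lemma~\ref{lem:oscillation} for $k_j\neq0$, absorb the $k_j=0$ terms using $f\in L^2$, and finish with AM--GM; your form $(\prod a_j)^{1/d}\le\frac1d\sum a_j$ with $a_j=\log^d(\abs{k_j}+2)$ is the same inequality as the paper's $\prod a_j\le\frac1d\sum a_j^d$ with $a_j=\log(\abs{k_j}+2)$.
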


\begin{proof}
Fix a coordinate $j$. Since $f\in L^2(\T^d)$,
\[
 \widehat{\Delta_j(t)f}(k)=(e^{ik_jt}-1)\wh f(k),\qquad k\in\Z^d,
\]
and Plancherel's theorem gives, for every $t$,
\[
 \norm{\Delta_j(t)f}_2^2
 =\sum_{k\in\Z^d}\abs{\wh f(k)}^2\abs{e^{ik_jt}-1}^2.
\]
Multiplying by $(\log(e/t))^{d-1}/t$ and applying Tonelli's theorem, the assumption \eqref{eq:coordinate-dini} implies
\begin{equation}\label{eq:tonelli-coordinate}
 \sum_{k\in\Z^d}\abs{\wh f(k)}^2
 \int_0^\eta \abs{e^{ik_jt}-1}^2\left(\log\frac et\right)^{d-1}\frac{\dd t}{t}<\infty .
\end{equation}
Lemma~\ref{lem:oscillation} yields
\begin{equation}\label{eq:one-coordinate-weight}
 \sum_{\substack{k\in\Z^d\\ \abs{k_j}\ge1}}\abs{\wh f(k)}^2\log^d(\abs{k_j}+2)<\infty .
\end{equation}
Because $f\in L^2(\T^d)$, Plancherel gives $\sum_k\abs{\wh f(k)}^2<\infty$. Therefore the terms with $k_j=0$ may be added to \eqref{eq:one-coordinate-weight}, and for every $j$,
\begin{equation}\label{eq:all-coordinate-weight}
 \sum_{k\in\Z^d}\abs{\wh f(k)}^2\log^d(\abs{k_j}+2)<\infty .
\end{equation}
For non-negative $a_1,\ldots,a_d$,
\[
 \prod_{j=1}^d a_j\le \frac1d\sum_{j=1}^d a_j^d.
\]
Taking $a_j=\log(\abs{k_j}+2)$, multiplying by $\abs{\wh f(k)}^2$, and summing over $k$ gives \eqref{eq:product-log-condition} by \eqref{eq:all-coordinate-weight}.
\end{proof}

\section{Proofs of the endpoint results}

\begin{proof}[Proof of Theorem~\ref{thm:main}]
For each coordinate $j$ and every $t>0$,
\[
 \norm{\Delta_j(t)f}_2\le \omega_2(f,t).
\]
Thus the endpoint Dini assumption \eqref{eq:main-dini} implies \eqref{eq:coordinate-dini} for every coordinate. Proposition~\ref{prop:difference-to-product} gives the Kaczmarz--Kojima product-log condition \eqref{eq:product-log-condition}. The convergence conclusion follows from Theorem~\ref{thm:KK}.
\end{proof}

\begin{proof}[Proof of Corollary~\ref{cor:double-log}]
For small $t$, the hypothesis \eqref{eq:double-log-assumption} gives
\[
 \omega_2(f,t)^2\left(\log\frac et\right)^{d-1}\frac1t
 \le
 C\frac1{t\log(e/t)\left(\log\log(e^e/t)\right)^{1+2\gamma}}.
\]
With $u=\log(e/t)$, the right-hand side is integrable near the origin, since the corresponding integral is bounded by a constant multiple of
\[
 \int^{\infty}\frac{\dd u}{u(\log u)^{1+2\gamma}}<\infty .
\]
Hence \eqref{eq:main-dini} holds, and Theorem~\ref{thm:main} applies.
\end{proof}

\begin{corollary}[Recovery of Zhizhiashvili's $L^2$ theorem]\label{cor:recover-zh}
Let $d\ge2$ and $f\in L^2(\T^d)$. If \eqref{eq:zhizhiashvili-classical} holds for some $\beta>d/2$, then $S_{\mathbf n}f(x)\to f(x)$ almost everywhere in the Pringsheim sense.
\end{corollary}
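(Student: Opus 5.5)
The plan is to deduce Corollary~\ref{cor:recover-zh} directly from Theorem~\ref{thm:main}: I will show that the classical hypothesis \eqref{eq:zhizhiashvili-classical} with $\beta>d/2$ implies the endpoint Dini condition \eqref{eq:main-dini} for some $\eta\in(0,1)$. Once that is established, Theorem~\ref{thm:main} gives almost-everywhere Pringsheim convergence of $S_{\mathbf n}f$ to $f$.

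First I fix the parameters. By \eqref{eq:zhizhiashvili-classical} there are constants $C>0$ and $\delta_0\in(0,1)$ such that
\[
 \omega_2(f,t)\le C\left(\log\tfrac1t\right)^{-\beta}\qquad (0<t<\delta_0).
\]
I take $\eta=\min(\delta_0,e^{-1})$. Then $\log(1/t)\ge1$ on $(0,\eta)$, and since $\log(e/t)=1+\log(1/t)\le 2\log(1/t)$ there, the two logarithms are comparable.

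Next I bound the integrand on $(0,\eta)$:
\[
 \omega_2(f,t)^2\left(\log\tfrac et\right)^{d-1}\frac1t\le C\,\frac{\left(\log\tfrac1t\right)^{d-1-2\beta}}{t}.
\]
Substituting $u=\log(1/t)$, the integral over $(0,\eta)$ is bounded by
\[
 C\int_{\log(1/\eta)}^\infty u^{d-1-2\beta}\dd u .
\]
This is finite exactly when $d-1-2\beta<-1$, that is, when $2\beta>d$. This is precisely where the strict inequality $\beta>d/2$ enters: at $\beta=d/2$ the integral is logarithmically divergent, which is why Corollary~\ref{cor:double-log} needs its extra iterated-logarithm factor.

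Hence \eqref{eq:main-dini} holds with this $\eta$, and Theorem~\ref{thm:main} applies. No step is a real obstacle. The only care needed is choosing $\eta$ small enough that the $O$-bound holds and that $\log(e/t)$ is comparable to $\log(1/t)$. Since \eqref{eq:main-dini} is an integrability statement near $0$, one could equally keep a given $\eta$ and bound the piece of the integral over $[\delta_0,\eta]$ by $\omega_2\le 2\norm{f}_2$.
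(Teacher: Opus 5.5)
Your proposal is correct and follows the paper's proof: both verify the Dini condition \eqref{eq:main-dini} by a logarithmic change of variables that reduces it to $\int^{\infty}u^{d-1-2\beta}\,du<\infty$. The paper substitutes $u=\log(e/t)$ directly, whereas you first use $\log(e/t)\le 2\log(1/t)$ and substitute $u=\log(1/t)$; your explicit choice $\eta=\min(\delta_0,e^{-1})$ spells out a step the paper leaves implicit.
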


\begin{proof}
After the change of variables $u=\log(e/t)$, the integral in \eqref{eq:main-dini} is dominated by a constant multiple of
\[
 \int^{\infty}u^{d-1-2\beta}\dd u,
\]
which is finite precisely when $\beta>d/2$. The assertion follows from Theorem~\ref{thm:main}.
\end{proof}

\begin{remark}
Corollary~\ref{cor:recover-zh} shows that the classical theorem of Zhizhiashvili is contained in the endpoint Dini criterion. Corollary~\ref{cor:double-log} shows the strict endpoint gain: at the critical logarithmic exponent, a summable iterated-logarithmic improvement is sufficient. This is the precise sense in which Theorem~\ref{thm:main} strengthens Zhizhiashvili's logarithmic modulus criterion.
\end{remark}

\section*{Conflicts of Interest}

The authors declare that they have no conflicts of interest.

\section*{Data Availability}

Not applicable.

\end{document}